\newcommand{\F}{{\mathbb{F}}}
\newcommand{\Q}{{\mathbb{Q}}}
\newcommand{\Z}{{\mathbb{Z}}}
\newcommand{\K}{{\mathbb{K}}}
\newcommand{\ba}{{\mathbf{a}}}
\newcommand{\bB}{{\mathbf{B}}}
\newcommand{\bG}{{\mathbf{G}}}
\newcommand{\bO}{{\mathbf{O}}}
\newcommand{\bT}{{\mathbf{T}}}
\newcommand{\bW}{{\mathbf{W}}}
\newcommand{\fB}{{\mathfrak{B}}}
\newcommand{\cO}{{\mathcal{O}}}
\newcommand{\cD}{{\mathcal{D}}}
\newcommand{\cF}{{\mathcal{F}}}
\newcommand{\cH}{{\mathcal{H}}}
\newcommand{\cR}{{\mathcal{R}}}
\newcommand{\cS}{{\mathcal{S}}}
\newcommand{\cLR}{{\operatorname{LR}}}
\newcommand{\Irr}{\operatorname{Irr}}
\newcommand{\Unip}{\operatorname{Unip}}
\newtheorem{thm}{Theorem}[section]
\newtheorem{prop}[thm]{Proposition}
\newtheorem{conj}[thm]{Conjecture}
\theoremstyle{definition}
\newtheorem{exmp}[thm]{Example}
\theoremstyle{remark}
\newtheorem{rem}[thm]{Remark}
\renewcommand{\leq}{\leqslant}
\renewcommand{\geq}{\geqslant}
\address{Institute of Mathematics, University of Aberdeen, Aberdeen 
AB24 3UE, Scotland, UK}
\email{m.geck@abdn.ac.uk}
\begin{document}

\date{}

\title{Remarks on modular representations of finite groups of
Lie type in non-defining characteristic}

\author{Meinolf Geck}

\subjclass[2000]{Primary 20C33; Secondary 20C20}

\keywords{Finite groups of Lie type, unipotent representations, 
decomposition numbers.}

\begin{abstract} Let $G$ be a finite group of Lie type and $\ell$ be a 
prime which is not equal to the defining characteristic of $G$. In this
note we discuss some open problems concerning the $\ell$-modular 
irreducible representations of $G$. We also establish a strengthening 
of the results in \cite{myprinc} on the classification of the 
$\ell$-modular principal series representations of $G$.
\end{abstract}

\maketitle

\pagestyle{myheadings}
\markboth{Geck}{Remarks on modular representations}

\section{Introduction} \label{sec0}

Let $G$ be a finite group and $\ell$ be a prime number. Let $K$ be a 
field of characteristic $0$ and assume that $K$ is ``sufficiently large''
(that is, $K$ is a splitting field for $G$ and all its subgroups). Let 
$\cO$ be a discrete valuation ring in $K$, with residue field $k$ of 
characteristic $\ell>0$. Let $\Irr_K(G)$ denote the set of irreducible 
representations of $G$ over $K$ (up to isomorphism) and let $\Irr_k(G)$ 
denote the set of irreducible representations of $G$ over $k$ (up to 
isomorphism). In the setting of Brauer's classical modular representation 
theory (see, for example, Curtis--Reiner \cite[\S 16]{CR2}), we have a 
decomposition map 
\[ d_\cO \colon \cR_0(KG) \rightarrow \cR_0(kG)\]
between the Grothendieck groups of finite-dimensional representations
of $KG$ and $kG$, respectively. Given $\rho \in \Irr_K(G)$ and 
$Y \in \Irr_k(G)$, we denote by $\langle \rho:Y\rangle_\cO$ the
corresponding decomposition number, that is, the multiplicity of
the class of $Y$ in the image of the class of $\rho$ under the map
$d_\cO$. Assuming that $\Irr_K(G)$ is sufficiently well known, the
decomposition numbers $\langle \rho:Y\rangle_\cO$ provide a tool for 
using the available information in characteristic $0$ to derive information 
concerning $\Irr_k(G)$. 

We shall consider the situation where $G$ is a finite group of Lie 
type and $\ell$ is a prime which is not equal to the defining 
characteristic of $G$. The work of Lusztig \cite{LuBook}, \cite{Lu5}
provides a complete picture about the classification and the dimensions 
of the irreducible representations of $G$ over $K$. As far as 
$\ell$-modular representations are concerned, the compatibility 
of $\ell$-blocks with Lusztig series (see \cite{BrMi}, \cite{bs1}) 
suggests that $\Irr_k(G)$ is very closely related to $\Irr_K(G)$, where 
one might hope to quantify the degree of ``closeness'' in terms of 
suitable properties of the decomposition numbers of $G$. (This is in 
contrast to the situation for modular representations in the defining 
characteristic, which tend to be far away from $\Irr_K(G)$; see
Remark~\ref{remell}.) 

More precise information is available for groups of type $A_n$ by the work 
of Fong--Srinivasan \cite{fs1} and Dipper--James \cite{DJ1}. However, for 
groups of other types, much less is known except for special 
characteristics (see, for example, Gruber--Hiss \cite{GrHi}) or groups
of small ranks where explicit computations are possible (see, for 
example, \cite{LaMi}, \cite{OkuWak1}, \cite{OkuWak2}).

In Section~\ref{sec1} we formulate a conjecture concerning the 
classification of the ``unipotent'' modular representations of $G$. 
There is considerable evidence that this conjecture holds in general;
see Remark~\ref{rem2}. In Section~\ref{sec2} we provide a partial proof 
as far as the unipotent modular principal series prepresentations of $G$ 
are concerned; this strengthens the results obtained previously in 
\cite{myprinc}. Finally, in Section~\ref{sec3}, we consider the 
dimensions of irreducible representations and state, as a challenge,
a general ``qualitative'' conjecture for the set of all $\ell$-modular 
representations of $G$. 

Note that the conjectures that we state here are not variations of 
general conjectures on finite groups: Besides their potential immediate
interest, they express properties of modular representations of finite 
groups of Lie type which, if true, would provide further evidence for the 
sharp distinction between the non-defining and the defining characteristic 
case. 
 
To fix some notation, let $p$ be a prime number and $\overline{\F}_p$ 
be an algebraic closure of $\F_p=\Z/p\Z$. Let $\bG$ be a connected 
reductive algebraic group over $\overline{\F}_p$ and $F \colon \bG 
\rightarrow \bG$ be a homomorphism of algebraic groups such that some 
power of $F$ is the Frobenius map relative to a rational structure on 
$\bG$ over some finite subfield of $\overline{\F}_p$. Then $\bG^F:=
\{g\in\bG\mid F(g)=g\}$ is called a finite group of Lie type; we shall 
write $G:=\bG^F$. Similar conventions apply to $F$-stable subgroups 
of $\bG$.  Let $\bB \subseteq \bG$ be an $F$-stable Borel subgroup and 
$\bT_0\subseteq \bB$ be an $F$-stable maximal torus. Then we write
$B:=\bB^F$ and $T_0:=\bT_0^F$. Let $\bW=N_{\bG}(\bT_0)/\bT_0$ be the 
Weyl group of $\bG$. Then $F$ induces an automorphism $\gamma \colon 
\bW \rightarrow \bW$. 

Let $\delta\geq 1$ be minimal such that $F^\delta$ is the Frobenius
map relative to a rational structure on $\bG$ over a finite subfield
$k_0 \subseteq \overline{\F}_p$. Define $q>0$ to be the unique real
number such that $|k_0|=q^\delta$. (If $\bG$ is simple modulo its
center, then $\delta=1$ and $q$ is a power of $p$, except when $G$
is a Suzuki or Ree group in which case $\delta=2$ and $q$ is an odd
power of $\sqrt{2}$ or $\sqrt{3}$.)

Throughout this paper (except for the final Remark~\ref{remell}), we 
assume that $K,\cO,k$ as above are such that $\ell\neq p$.

\section{On modular unipotent representations} \label{sec1}

Let $\Unip_K(G)\subseteq \Irr_K(G)$ be the set of unipotent 
representations of $G$, as defined by Deligne--Lusztig \cite{DeLu}. 
(Note that, in order to define $\Unip_K(G)$, one first needs to 
work over $\overline{\Q}_\ell$, where $\ell \neq p$, in order to 
construct the virtual representations $R_{T,1}$ of \cite{DeLu}; 
since the character values of $R_{T,1}$ are rational integers, 
the set $\Unip_K(G)$ is then unambiguously defined for any $K$ of 
characteristic $0$.) The {\em $\ell$-modular unipotent representations}
of $G$ are defined to be
\[ \Unip_k(G):=\{Y\in\Irr_k(G)\mid\langle \rho:Y\rangle_\cO 
\neq 0 \mbox{ for some } \rho \in \Unip_K(G)\}.\]
We wish to state a conjecture about the classification of
$\Unip_k(G)$. First, we recall some results about the characteristic 
$0$ representations of $G$. 

Let $\bO$ be an $F$-stable unipotent conjugacy class of $\bG$. 
Let $u_1\ldots,u_r \in \bO^F$ be representatives of the $G$-conjugacy 
classes contained in $\bO^F$. For each $j$, let $A(u_j)$ be the 
group of connected components of the centraliser of $u_j$ in $\bG$. 
Since $F(u_j)=u_j$, there is an induced action of $F$ on $A(u_j)$ 
which we denote by the same symbol. Now let $\rho \in \Irr_K(G)$.
Then we define the {\em average value} of $\rho$ on $\bO^F$ by 
\[\mbox{AV}(\bO,\rho):=\sum_{1\leq j\leq r}[A(u_j):A(u_j)^F]\,
\mbox{trace}(u_j, \rho).\]
(Note that $\mbox{AV}(\bO,\rho)$ does not depend on the choice of
the representatives $u_j$.) Assuming that $p,q$ are large enough, Lusztig 
\cite{Lu6} has shown that, given $\rho\in \Irr_K(G)$, there exists a 
{\em unique} $F$-stable unipotent class $\bO_\rho$ satisfying the
following two conditions:
\begin{itemize}
\item $\mbox{AV}(\bO_\rho,\rho)\neq 0$ and 
\item if $\bO$ is any $F$-stable unipotent class $\bO$ such 
that $\mbox{AV}(\bO, \rho) \neq 0$, then $\bO=\bO_\rho$ or 
$\dim \bO <\dim \bO_\rho$.
\end{itemize}
The class $\bO_\rho$ is called the {\em unipotent support} of $\rho$. 
The assumptions on $p,q$ have subsequently been removed in \cite{GeMa}. 
Thus, every $\rho\in\Irr_K(G)$ has a well-defined unipotent support 
$\bO_\rho$. Using this concept, we can associate to every $\rho \in 
\Irr_K(G)$ a numerical invariant $\ba_\rho$ by setting
\[ \ba_\rho:=\dim \fB_u \qquad (u \in \bO_\rho)\]
where $\fB_u$ is the variety of Borel subgroups of $\bG$ containing~$u$.

Now consider the unipotent representations $\Unip_K(G)$. By \cite[Main 
Theorem 4.23]{LuBook}, there is a bijection 
\[ \bar{X}(\bW,\gamma) \quad \stackrel{1{-}1}{\longleftrightarrow}
\quad \Unip_K(G), \qquad x \leftrightarrow \rho_x,\]
where $\bar{X}(\bW,\gamma)$ is a finite set depending only on 
the Weyl group $\bW$ of $\bG$ and the automorphism $\gamma \colon
\bW \rightarrow \bW$ induced by the action of $F$. (This bijection
satisfies further properties as specified in \cite[4.23]{LuBook}; 
we shall not need to discuss these properties here.)

We shall need two further pieces of notation. Let $Z_{\bG}$ be the 
center of $\bG$. Then $(Z_\bG/Z_{\bG}^{\circ})_F$ denotes the largest 
quotient of $Z_\bG/Z_{\bG}^\circ$ on which $F$ acts trivially. Also 
recall (e.g., from \cite[p.~28]{C2}) that a prime number is called 
{\em good} for $\bG$ if it is good for each simple factor involved 
in~$\bG$; the conditions for the various simple types are as follows.
\begin{center} $\begin{array}{rl} A_n:\quad& \mbox{no condition}, \\
B_n, C_n, D_n: \quad & \ell \neq 2, \\
G_2, F_4, E_6, E_7: \quad &  \ell \neq 2,3, \\
E_8: \quad & \ell \neq 2,3,5.  \end{array}$
\end{center}
Now we can state:

\begin{conj}[Geck \protect{\cite[\S 2.5]{myphd}}, Geck--Hiss
\protect{\cite[\S 3]{lymgh}}] \label{modconj} Assume that $\ell$ 
is good for $\bG$ and that $\ell$ does not divide the order of 
$(Z_\bG/Z_{\bG}^{\circ})_F$. Then there is a labelling 
$\Unip_k(G)=\{ Y_x \mid x \in \bar{X}(\bW,\gamma)\}$ such that 
the following conditions hold for all $x,x' \in \bar{X}(\bW,\gamma)$:
\begin{alignat*}{2}
\langle \rho_x &: Y_x \rangle_{\cO} &\;=\;&1, \\ \langle \rho_{x'} 
&: Y_x \rangle_{\cO} &\;\neq\; & 0 \quad \Rightarrow \quad
x=x' \quad \mbox{or} \quad \bO_{\rho_{x'}} \subsetneqq 
\overline{\bO}_{\rho_x}.
\end{alignat*}
(Note that, if such a labelling exists, then it is uniquely determined.)
\end{conj}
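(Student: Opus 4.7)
The plan is to combine two ingredients: a ``basic set'' property for $\Unip_K(G)$, and the compatibility of unipotent supports with $\ell$-modular reduction. The basic set assertion would say that $|\Unip_k(G)|=|\Unip_K(G)|=|\bar{X}(\bW,\gamma)|$ and that the submatrix of the decomposition matrix indexed by $\Unip_K(G)\times\Unip_k(G)$ is unitriangular in some ordering. Once available, the task is to identify this ordering as the one induced by the dimension of the unipotent support.

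For the second ingredient, the key observation is that under the good-prime hypothesis every unipotent element of $G$ has $p$-power order, hence is $\ell$-regular. Consequently, for any $\rho\in\Irr_K(G)$ and any $F$-stable unipotent class $\bO$,
\[ \mbox{AV}(\bO,\rho)=\sum_{Y\in\Irr_k(G)}\langle \rho:Y\rangle_\cO\,\mbox{AV}(\bO,Y), \]
where the right-hand side uses Brauer character values on the $\ell$-regular representatives $u_j$. I would use this linear relation to transfer Lusztig's and Geck--Malle's unipotent-support construction to the modular side: set $\bO_Y$ to be an $F$-stable unipotent class of maximal dimension with $\mbox{AV}(\bO_Y,Y)\neq 0$. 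Existence follows because $Y$ appears in some $\rho$ with $\mbox{AV}(\bO_\rho,\rho)\neq 0$; the uniqueness up to closure order can then be extracted from the displayed formula together with the known uniqueness of $\bO_\rho$.

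Given these pieces, the labelling is essentially forced. Order $\bar{X}(\bW,\gamma)$ so that $x\mapsto \dim\bO_{\rho_x}$ is weakly decreasing, breaking ties arbitrarily. The basic set property then produces a bijection $x\mapsto Y_x$ with $\langle\rho_x:Y_x\rangle_\cO=1$ and with all other nonzero entries $\langle\rho_{x'}:Y_x\rangle_\cO$ lying strictly below the diagonal in this order. Applying the displayed formula to each such $\rho_{x'}$ and using $\bO_{Y_x}=\bO_{\rho_x}$, one rules out equal-dimension coincidences other than $x=x'$ and obtains the required strict closure containment $\bO_{\rho_{x'}}\subsetneq\overline{\bO}_{\rho_x}$.

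The main obstacle is the basic set property itself. The natural strategy, generalising the principal-series treatment of Section~\ref{sec2}, is to decompose $\Unip_K(G)$ into Harish-Chandra series, reduce to decomposition matrices of the associated cyclotomic Hecke algebras at roots of unity, and invoke the Geck--Rouquier canonical basic sets. A uniform proof requires both compatibility of $\ell$-modular with ordinary Harish-Chandra series for $\Unip_k(G)$ — in particular, adequate control of modular cuspidal unipotents — and a matching of the resulting combinatorial labels with the ordering by unipotent support on $\bar{X}(\bW,\gamma)$. Outside type~$A$, the cuspidal side is where the real difficulty lies, and this is where any general proof must expend most of its effort.
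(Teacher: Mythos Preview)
This statement is a \emph{conjecture}: the paper does not prove it, and indeed presents it as open. What the paper offers instead (Remark~\ref{rem2}) is a reformulation in terms of projective $\cO G$-modules $\Phi_x$ and an explanation of why Kawanaka's generalised Gelfand--Graev representations $\Gamma_u$ are natural candidates for approximating the $\Phi_x$: the closure relation on unipotent supports enters via Lusztig's multiplicity formula together with the Achar--Aubert refinement, and the full conjecture is reduced to Kawanaka's conjecture \cite[(2.4.5)]{Kaw2} on modified GGGRs. Your proposal follows a genuinely different route --- basic sets plus Harish-Chandra/Hecke-algebra methods --- which is essentially the route the paper uses for the \emph{principal series} fragment in Section~\ref{sec2}, and you are right that the modular-cuspidal side is where this breaks down in general.

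That said, there are concrete gaps in your sketch beyond the one you flag. First, the ``basic set'' results of \cite{bs1}, \cite{bs2} give, under the stated hypotheses, that the unipotent block of the decomposition matrix is square and invertible over $\Z$; they do \emph{not} give unitriangularity in any ordering. Even the weaker numerical version of the conjecture (with $\ba_{\rho_{x'}}>\ba_{\rho_x}$ in place of the closure condition; see Remark~\ref{rem1}) is open in general, so you cannot take unitriangularity as an input. Second, your transfer of unipotent supports to the modular side via
\[
\mbox{AV}(\bO,\rho)=\sum_{Y}\langle\rho:Y\rangle_\cO\,\mbox{AV}(\bO,Y)
\]
does not by itself yield $\bO_{Y_x}=\bO_{\rho_x}$: from $\mbox{AV}(\bO_{\rho_x},\rho_x)\neq 0$ you cannot isolate the $Y_x$-term without ruling out cancellation among the other summands, and that cancellation control is precisely what one lacks absent the conjecture. (Incidentally, unipotent elements of $G$ always have $p$-power order; the good-prime hypothesis on $\ell$ plays no role in that step.) So as a strategy your outline is reasonable and close in spirit to what is done for the principal series, but it does not get further than the paper does, and the paper's GGGR reformulation is at present the more promising line because it explains \emph{why} the closure order, rather than merely the $\ba$-function, should govern the triangularity.
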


\begin{rem} \label{rem0} Under the above assumption on $\ell$, it
is known by \cite{bs1}, \cite{bs2} that 
\[ |\Unip_k(G)|=|\Unip_K(G)|.\]
If $\ell$ is not good for $\bG$, or if $\ell$ divides the order of 
$(Z_{\bG}/Z_{\bG}^\circ)_F$, then we have $|\Unip_k(G)|\neq 
|\Unip_K(G)|$ in general. For further information on the cardinalities 
$|\Unip_k(G)|$ in such cases, see \cite[6.6]{lymgh}. 
\end{rem}

\begin{rem} \label{rem1} The formulation of the above conjecture 
is somewhat stronger than that in \cite[Conj.~1.3]{myprinc} where,
instead of the ``geometric'' condition 
\[ \langle \rho_{x'} : Y_x \rangle_{\cO} \;\neq\;  0 \quad \Rightarrow 
\quad x=x' \quad \mbox{or} \quad \bO_{\rho_{x'}} \subsetneqq 
\overline{\bO}_{\rho_{x}},\]
we used the purely numerical condition:
\[ \langle \rho_{x'} : Y_x \rangle_{\cO} \;\neq\; 0 \quad \Rightarrow 
\quad x=x' \quad \mbox{or} \quad \ba_{\rho_{x'}}>\ba_{\rho_{x}}.\]
The stronger version is known to hold for $G=\mbox{GL}_n(\F_q)$ (see
Dipper--James \cite{DJ1} and the references there) and $G=\mbox{GU}_n
(\F_q)$ (see \cite[\S 2.5]{myphd}, \cite[\S 2.5]{myprinc}). The argument
for $\mbox{GU}_n(\F_q)$ essentially relies on Kawanaka's theory 
\cite{Kaw2} of generalised Gelfand--Graev representations; see also 
Remark~\ref{rem2} below. Further support will be provided by 
Proposition~\ref{prop1} below.
\end{rem}

\begin{rem} \label{rem2} Using Brauer reciprocity, 
Conjecture~\ref{modconj} can be alternatively stated as follows. 
There should exist finitely generated, projective $\cO G$-modules $\{\Phi_x
\mid x \in \bar{X}(\bW,\gamma)\}$ such that, for any $x \in
\bar{X}(\bW,\gamma)$, we have:
\begin{align*}
 K \otimes_\cO \Phi_x \cong \rho_x &\oplus \mbox{ (direct sum
of various $\rho_{x'}$ where $\bO_{\rho_{x'}} \subsetneqq 
\bO_{\rho_x}$)}\\
& \oplus \mbox{ (direct sum of various non-unipotent $\rho'\in 
\Irr_K(G)$)}.
\end{align*}
This formulation is particularly useful in connection with
Kawanaka's theory \cite{Kaw2} of generalised Gelfand--Graev 
representations. Assume that $p,q$ are sufficiently large such that
Lusztig's results \cite{Lu6} hold. Let $u \in G$ be a unipotent 
element and denote by $\Gamma_u$ the corresponding generalised 
Gelfand--Graev representation of $G$ over $K$. Since $\Gamma_u$ is 
obtained by inducing a representation from a unipotent subgroup 
of $G$, we have $\Gamma_u \cong K\otimes_{\cO} \Upsilon_u$ where 
$\Upsilon_u$ is a finitely generated, projective $\cO G$-module. 

Now let $x\in \bar{X}(\bW,\gamma)$. Then we can find a unipotent
element $u\in G$ such that $\Gamma_u$ is a linear combination of 
various $\rho'\in \Irr_K(G)$ where $\bO_{\rho'}\subseteq 
\overline{\bO}_{\rho_x}$, and where the coefficient of $\rho_x$ is 
non-zero.  This follows from the multiplicity formula in 
\cite[Theorem~11.2]{Lu6}, together with the refinement obtained by 
Achar--Aubert \cite[Theorem~9.1]{acau}. Thus, $\Upsilon_u$ is a first 
approximisation to the hypothetical projective $\cO G$-module 
$\Phi_x$, more precisely, $\Phi_x$ should be a direct summand of
$\Upsilon_u$. This also shows that the closure relation among unipotent 
supports naturally appears in this context. 

The special feature of the case where $G=\mbox{GL}_n(\F_q)$ or 
$\mbox{GU}_n(\F_q)$ is that then $|\bar{X}(\bW,\gamma)|$ is equal to
the number of unipotent classes of $\bG$ and, using the above
notation, we can just take $\Phi_x$ to be $\Upsilon_u$.

In general, it seems to be necessary to work with certain modified
generalised Gelfand--Graev representations, as proposed by Kawanaka 
\cite[\S 2]{Kaw2}. In this context, Conjecture~\ref{modconj}
would follow from the conjecture in \cite[(2.4.5)]{Kaw2}.
\end{rem}

Finally, we give an alternative description of the closure relation
among unipotent supports which will play a crucial role in 
Section~\ref{sec2}.

\begin{rem} \label{2kl} Let $\leq_{\cLR}$ be the two-sided 
Kazhdan--Lusztig pre-order relation on $\bW$; see
\cite[Chap.~5]{LuBook}. Given $w,w' \in \bW$ we write $w \sim_{\cLR} 
w'$ if $w \leq_{\cLR} w'$ and $w' \leq_{\cLR} w$. The equivalence 
classes for this relation are called the two-sided cells of $\bW$. 
By \cite[5.15]{LuBook}, we have a natural partition
\[ \Irr_K(\bW)=\coprod_\cF \Irr_K(\bW\mid \cF)\]
where $\cF$ runs over the two-sided cells of $\bW$. Now the
automorphism $\gamma \colon \bW \rightarrow \bW$ induces a permutation
of the two-sided cells of $\bW$. By \cite[6.17]{LuBook}, we also have
a natural partition
\[ \Unip_K(G)=\coprod_\cF \Unip_K(G\mid \cF)\]
where $\cF$ runs over the $\gamma$-stable two-sided cells of $\bW$. 
Given $x \in \bar{X}(\bW,\gamma)$, we denote by $\cF_x$ the unique 
$\gamma$-stable two-sided cell of $\bW$ such that  $\rho_x \in 
\Unip_K(G\mid \cF_x)$. By \cite{Lu6} and \cite[Prop.~4.2]{GeMa}, the
above partition can be characterised in terms of unipotent supports as 
follows: 
\[ \cF_x=\cF_{x'} \qquad \Leftrightarrow \qquad \bO_{\rho_x}
=\bO_{\rho_{x'}}\qquad\mbox{for all $x,x'\in\bar{X}(\bW,\gamma)$}.\]
Now $\leq_{\cLR}$ induces a partial order relation on the set of
two-sided cells which we denote by the same symbol. Thus, given 
two-sided cells $\cF,\cF$ of $\bW$, we write $\cF \leq_{\cLR} \cF'$ 
if and only if $w \leq_{\cLR} w'$ for all $w\in\cF$ and $w'\in\cF'$. 
\end{rem}

\begin{prop}[See \protect{\cite[Cor.~5.6]{myord}}] \label{klord} In
the above setting, we have 
\[ \cF_x\leq_{\cLR} \cF_{x'} \qquad \Leftrightarrow \qquad \bO_{\rho_x}
\subseteq \overline{\bO}_{\rho_{x'}}\qquad\mbox{for all $x,x'
\in\bar{X}(\bW,\gamma)$}.\]
\end{prop}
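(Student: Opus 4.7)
The plan is to use the relationship between Lusztig's $a$-function, the invariant $\ba_{\rho}$, and the unipotent support, and then to reduce the statement to a property of the parametrisation of two-sided cells by special unipotent classes of $\bG$ that is entirely intrinsic to $\bW$ and $\bG$.

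First I would establish the numerical correspondence between the two orderings. Lusztig's $a$-function (\cite[5.4]{LuBook}) is constant on each two-sided cell $\cF$ with some value $a(\cF)$, and combining \cite[5.15, 6.17]{LuBook} with \cite[Prop.~4.2]{GeMa} one obtains $a(\cF_x)=\ba_{\rho_x}$ for every $x\in\bar{X}(\bW,\gamma)$. Using the dimension formula $\dim\bO=\dim\bG-\mathrm{rank}(\bG)-2\dim\fB_u$ for $u\in\bO$, this translates into the equivalence $\ba_{\rho_x}\geq\ba_{\rho_{x'}}$ if and only if $\dim\bO_{\rho_x}\leq\dim\bO_{\rho_{x'}}$. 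Since $\cF\leq_{\cLR}\cF'$ forces $a(\cF)\geq a(\cF')$ by \cite[Chap.~5]{LuBook}, the hypothesis $\cF_x\leq_{\cLR}\cF_{x'}$ already yields the weaker dimension inequality $\dim\bO_{\rho_x}\leq\dim\bO_{\rho_{x'}}$.

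Next, by Remark~\ref{2kl} and the results of \cite{Lu6}, \cite{GeMa}, the map $\cF_x\mapsto\bO_{\rho_x}$ descends to a well-defined bijection between $\gamma$-stable two-sided cells of $\bW$ and $F$-stable special unipotent classes of $\bG$. Consequently, the proposition reduces to the statement that the two-sided Kazhdan--Lusztig preorder on cells corresponds, under this bijection, to the inclusion order on closures of special classes. For the implication $\cF_x\leq_{\cLR}\cF_{x'}\Rightarrow\bO_{\rho_x}\subseteq\overline{\bO}_{\rho_{x'}}$ I would exploit the description of $\leq_{\cLR}$ in terms of truncated ($j$-)induction from parabolic subgroups of $\bW$, together with the fact that the Lusztig--Spaltenstein induction of unipotent classes from a Levi subgroup is compatible with closure orders. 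For the converse, one reduces by transitivity to covering relations in the Hasse diagram of the closure order on special classes and shows that each such cover can be realised by an appropriate parabolic induction, lifting it to a corresponding relation in $\leq_{\cLR}$.

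The main obstacle is precisely this upgrade from the $a$-value inequality to the genuine closure relation: the $a$-function only detects the dimension of the unipotent class, whereas the closure order on special classes is finer, particularly in the exceptional types where several special classes can share a common dimension. Overcoming this requires a structural argument linking Kazhdan--Lusztig cells to Lusztig--Spaltenstein induction, rather than a mere numerical calculation; this is the substance of \cite[Cor.~5.6]{myord}, building on the author's earlier analysis of cell orderings and on the compatibility of truncated induction with the special representations attached to families.
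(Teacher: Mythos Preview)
The paper does not give an independent argument here. It simply records that the statement is \cite[Cor.~5.6]{myord}, and then explains in one paragraph why the formulation in the present paper (via unipotent supports $\bO_{\rho_x}$) coincides with the formulation in \cite{myord} (via the Springer correspondence): by \cite{Lu6} and \cite[Theorem~3.7]{GeMa}, the assignment $\rho\mapsto\bO_\rho$ can itself be read off from the Springer correspondence, so the two bijections between two-sided cells and special unipotent classes agree. That translation is the entire content of the ``proof'' in the paper.

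Your proposal does something more ambitious: you try to sketch the actual argument behind \cite[Cor.~5.6]{myord}. The first paragraph (matching $a(\cF_x)$ with $\ba_{\rho_x}$ and deducing the dimension inequality) is correct and a useful warm-up, but it is not used in the paper. Your second paragraph correctly isolates the reduction to the bijection between cells and special classes; however, you phrase it through unipotent supports rather than through Springer, which is the bridge the paper actually invokes.

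Where your outline becomes shaky is the proposed mechanism for upgrading the $a$-value inequality to the closure relation. You assert that $\leq_{\cLR}$ admits a description ``in terms of truncated ($j$-)induction from parabolic subgroups'', and dually that every cover in the closure order on special classes is realised by a parabolic (Lusztig--Spaltenstein) induction. Neither of these is a standard fact one can just quote: $j$-induction produces \emph{some} relations in $\leq_{\cLR}$, but it is not known to generate the full preorder, and the covering-relation claim on the geometric side would itself require a case analysis. The argument in \cite{myord} does not proceed this way; it relies on an explicit determination of the two-sided order (including computer verification in the exceptional types) and its comparison with the known closure order on special classes. So your last paragraph correctly names the difficulty, but the method you propose to overcome it is not the one used, and as stated it has a genuine gap.
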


Note that, in \cite{myord}, we work in a slightly different 
setting where two-sided cells and unipotent classes are linked
via the Springer correspondence. However, by \cite{Lu6} and 
\cite[Theorem~3.7]{GeMa}, the map which assigns to each $\rho
\in \Unip_K(G)$ its support support $\bO_\rho$ can also be
interpreted in terms of the Springer correspondence. Thus, indeed,
Proposition~\ref{klord} as formulated above is equivalent to 
\cite[Cor.~5.6]{myord}.

\section{Principal series representations} \label{sec2}
Recall that $\bB \subseteq \bG$ is an $F$-stable Borel subgroup. Let 
$B:=\bB^F \subseteq G$ and consider the permutation module $K[G/B]$ on the 
cosets of $B$. Then the set $\Irr_K(G\mid B)$ of (unipotent) principal
series representations is defined to be the set of all $\rho \in \Irr_K(G)$ 
such that $\rho$ is an irreducible constituent of $K[G/B]$. The modular 
analogue of $\Irr_K(G\mid B)$ is more subtle since $k[G/B]$ is not 
semisimple in general. Following Dipper \cite{QuotHom}, we define 
$\Irr_k(G\mid B)$ to be the set of all $Y \in \Irr_k(G)$ such that 
$\mbox{Hom}_{kG}(k[G/B],Y)\neq \{0\}$. By Frobenius reciprocity, we have
$Y \in \Irr_k(G\mid B)$ if and only if $Y$ admits non-zero vectors 
fixed by all elements of $B$. This fits with a general definition
of Harish--Chandra series for $\Irr_k(G)$; see Hiss \cite{Hiss1}. 
We have 
\[ \Irr_K(G\mid B) \subseteq \Unip_K(G) \quad \mbox{and}\quad  
\Irr_k(G\mid B) \subseteq \Unip_k(G).\] 
Thus, there is a subset $\Lambda \subseteq \bar{X}(\bW,\gamma)$ such
that $\Irr_K(G\mid B)=\{\rho_x \mid x \in \Lambda\}$. This subset
is explicitly described by \cite[Chap.~4]{LuBook}. It is also known 
that this subset is in bijection with $\Irr_K(\bW^\gamma)$; see 
\cite[\S 68B]{CR2}. 

Now assume that $\ell$ is good for $\bG$. Then, by 
\cite[Theorem~1.1]{myprinc} (see also \cite[\S 4.4]{GeNico}), there 
exists a unique subset $\Lambda_k^\circ\subseteq \bar{X}(\bW,\gamma)$ 
and a unique labelling $\Irr_k(G\mid B)=\{Y_x \mid x \in
\Lambda_k^\circ\}$ such that the following conditions hold for all 
$x \in \Lambda_k^\circ$ and $x' \in \bar{X}(\bW, \gamma)$:
\begin{alignat*}{2}
\langle \rho_{x} &: Y_{x}\rangle_{\cO} &\;=\;&1, \\ \langle \rho_{x'} 
&: Y_x \rangle_{\cO} &\;\neq\; & 0 \quad \Rightarrow \quad
x=x' \quad \mbox{or} \quad \ba_{\rho_{x'}}> \ba_{\rho_{x}}.
\end{alignat*}
Furthermore, we have in fact $\Lambda_k^\circ \subseteq \Lambda$. So, 
here, we used the numerical condition in Remark~\ref{rem1}. (This  
was the only condition available at the time of writing \cite{myprinc}.)
Our aim now is to show that we can replace this condition by the 
condition involving the closure relation among unipotent supports. 

\begin{prop} \label{prop1} Assume that $F$ acts as the identity on $\bW$
(that is, $\gamma=\operatorname{id}$). Then, using the above notation, 
the following implication holds for all $x\in \Lambda_k^\circ$ and 
$x'\in \bar{X}(\bW,\gamma)$:
\[\langle \rho_{x'} : Y_x \rangle_{\cO} \;\neq\;  0 \quad \Rightarrow 
\quad x=x' \quad \mbox{or} \quad \bO_{\rho_{x'}} \subsetneqq 
\overline{\bO}_{\rho_x}.\]
\end{prop}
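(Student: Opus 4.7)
The strategy is to translate the required closure relation into a two-sided Kazhdan--Lusztig cell inequality by means of Proposition~\ref{klord}, and then to extract that inequality from a strengthening of the Hecke-algebra input used in \cite{myprinc}. The first observation is that \cite[Thm.~1.1]{myprinc} already supplies $\ba_{\rho_{x'}} > \ba_{\rho_x}$ whenever $\langle\rho_{x'}:Y_x\rangle_\cO \ne 0$ and $x \ne x'$; since $\ba$ is constant on two-sided cells (Remark~\ref{2kl}), this rules out $\cF_{x'} = \cF_x$, and hence $\bO_{\rho_{x'}} \ne \bO_{\rho_x}$. It therefore suffices to prove the weak cell inequality $\cF_{x'} \leq_{\cLR} \cF_x$: Proposition~\ref{klord} then upgrades it to the desired strict closure $\bO_{\rho_{x'}} \subsetneqq \overline{\bO}_{\rho_x}$.

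The first step treats $x' \in \Lambda$, i.e.\ $\rho_{x'}$ itself is a principal series representation; here $\langle\rho_{x'}:Y_x\rangle_\cO$ coincides with a decomposition number of the Iwahori--Hecke algebra $\cH_\cO = \operatorname{End}_{\cO G}(\cO[G/B])^{\operatorname{op}}$. Under $\gamma = \operatorname{id}$, $\cH_\cO$ is an equal-parameter Hecke algebra of $\bW$, and the argument of \cite{myprinc} already produces an upper triangular decomposition matrix with respect to the $\ba$-function. What I would exploit is the finer statement that this triangularity is in fact with respect to the two-sided Kazhdan--Lusztig cell pre-order, following the cellular structure theorem for $\cH_\cO$ coming from the Kazhdan--Lusztig basis: specifically, if $[E_\phi : L_\psi] \ne 0$ with $\psi \in \Lambda_k^\circ$, then $\phi = \psi$ or $\cF_\phi <_{\cLR} \cF_\psi$. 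Translating back to $G$ gives $\cF_{x'} \leq_{\cLR} \cF_x$ for every such $x' \in \Lambda$.

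The more delicate case is $x' \notin \Lambda$, when $\rho_{x'}$ is a non-principal-series unipotent representation, and this is the main obstacle since $\cH_\cO$ no longer controls the decomposition number directly. I would argue via Brauer reciprocity: $\langle\rho_{x'}:Y_x\rangle_\cO$ equals the multiplicity of $\rho_{x'}$ in $K \otimes_\cO \tilde{P}_x$, where $\tilde{P}_x$ is an $\cO G$-lift of the projective cover of $Y_x$. The most promising route, motivated by Remark~\ref{rem2}, is to realise $\tilde{P}_x$ as a direct summand of some $\Upsilon_u$ with $u \in \bO_{\rho_x}^F$: the multiplicity formula of \cite[Theorem~11.2]{Lu6} refined by Achar--Aubert \cite[Theorem~9.1]{acau} would then force every $\rho'$ appearing in $K \otimes_\cO \Upsilon_u$ to satisfy $\bO_{\rho'} \subseteq \overline{\bO}_{\rho_x}$, whence the cell inequality $\cF_{x'} \leq_{\cLR} \cF_x$ follows via Proposition~\ref{klord}. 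Verifying this summand property for principal-series $Y_x$ amounts to a modular Whittaker-type statement and is where the real technical work of the proof lies.
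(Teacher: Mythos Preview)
Your treatment of the case $x'\in\Lambda$ is essentially the paper's own argument: reduce to the Hecke algebra $\cH_\cO$, use the cellular/Kazhdan--Lusztig input from \cite{mycell}, \cite{myedin} to get a cell-based triangularity $d_{E,M}\neq 0\Rightarrow E\cong E_M$ or $\cF_E\leq_{\cLR}\cF_{E_M}$ with $\cF_E\neq\cF_{E_M}$, identify $\cF_E=\cF_{x'}$ and $\cF_{E_M}=\cF_x$, and finish with Proposition~\ref{klord}. That part is fine.

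The genuine gap is your ``delicate case'' $x'\notin\Lambda$: it does not exist. Dipper's results \cite{QuotHom}, as recalled in \cite[\S 2.2]{myprinc}, give more than just the two bijections $\Irr(\cH_K)\leftrightarrow\Irr_K(G\mid B)$ and $\Irr(\cH_k)\leftrightarrow\Irr_k(G\mid B)$; they give the full formula
\[
\langle \rho_{x'}:Y_x\rangle_{\cO}=\begin{cases} d_{E,M} & \text{if } \rho_{x'}\cong\rho_E \text{ and } Y_x\cong Y_M,\\ 0 & \text{otherwise},\end{cases}
\]
for \emph{all} $x'\in\bar{X}(\bW,\gamma)$ and $x\in\Lambda_k^\circ$. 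In particular, if $\rho_{x'}$ is not a principal series representation then $\langle \rho_{x'}:Y_x\rangle_{\cO}=0$ and there is nothing to prove. So the entire argument lives inside the Hecke algebra, and the Gelfand--Graev machinery you propose (realising $\tilde P_x$ as a summand of some $\Upsilon_u$) is not needed. That route, besides being much harder, would also drag in the large-$p$, large-$q$ hypotheses from \cite{Lu6}, which the proposition does not assume.
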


\begin{proof} We go through the main steps of the proof of 
\cite[Theorem~1.1]{myprinc}. For this purpose, consider the Hecke 
algebra $\cH_\cO=\mbox{End}_{\cO G} (\cO[G/B])$ and let $\cH_K:=K 
\otimes_{\cO}  \cH_{\cO}$ and $\cH_k:=k \otimes_{\cO}  \cH_{\cO}$. 
Since $\gamma=\mbox{id}$, the algebra $\cH_\cO$ has a standard basis
$\{T_w \mid w \in \bW\}$ where the multiplication is given as follows,
where $s \in \bW$ is a simple reflection, $w \in \bW$ and $l$ is 
the length function:
\[ T_sT_w=\left\{\begin{array}{cl} T_{sw} & \qquad 
\mbox{if $l(sw)=l(w)+1$},\\ qT_{sw}+(q-1)T_w & \qquad 
\mbox{if $l(sw)=l(w)-1$}.\end{array}\right.\]
We also have a decomposition map between the Grothendieck groups 
of $\cH_K$ and $\cH_k$. Given $E \in \Irr(\cH_K)$ and $M \in 
\Irr(\cH_k)$, denote by $d_{E,M}$ the corresponding decomposition 
number. Now, by results of Dipper \cite{QuotHom} (see 
\cite[\S 2.2]{myprinc}), we have natural bijections 
\begin{alignat*}{2}
\Irr(\cH_K) \quad &\stackrel{1{-}1}{\longrightarrow} \quad
&\Irr_K(G\mid B), \qquad E &\mapsto \rho_E,\\
\Irr(\cH_k) \quad &\stackrel{1{-}1}{\longrightarrow} \quad
&\Irr_k(G\mid B), \qquad M &\mapsto Y_M;
\end{alignat*}
furthermore, for any $x \in \Lambda_k^\circ$ and $x' \in 
\bar{X}(\bW,\gamma)$, we have 
\begin{equation*}
\langle \rho_{x'}:Y_x\rangle_{\cO}=\left\{\begin{array}{cl}
d_{E,M} & \quad \mbox{if $\rho_{x'} \cong \rho_E$ and $Y_x \cong Y_M$},\\
0 & \quad \mbox{otherwise}.\end{array}\right.\tag{$*$}
\end{equation*}
Thus, we are reduced to a problem within the representation theory of 
$\cH_\cO$. Now, with every $E \in \Irr(\cH_K)$ we can associate a 
two-sided Kazhdan--Lusztig cell $\cF_E$ of $\bW$; see \cite[5.15]{LuBook}. 
As in \cite[\S 2.3]{myprinc}, let us define $\ba_E:=\ba_{\rho_E}$.
Then, by \cite[\S 2.4]{myprinc} (see also \cite[Prop.~3.2.7]{GeNico}),
there exists a unique injection 
\[ \Irr(\cH_k) \hookrightarrow \Irr(\cH_K), \qquad M \mapsto E_M,\]
satisfying the following conditions for all $M \in \Irr(\cH_k)$ and
$E \in \Irr(\cH_K)$:
\begin{align*}
d_{E_M,M}&=1, \tag{a}\\
d_{E,M} &\neq 0 \quad \Rightarrow \quad E\cong E_M \quad \mbox{or} 
\quad \ba_E>\ba_{E_M}.\tag{b}
\end{align*}
Now we can argue as follows. Assume that $\langle \rho_{x'}: Y_{x} 
\rangle_{\cO}\neq 0$ where $x\in \Lambda_k^\circ$ and $x'\in \bar{X}
(\bW,\gamma)$. By ($*$), we must have $\rho_{x'}\cong \rho_E$
for some $E \in \Irr(\cH_K)$ and $Y_x \cong Y_M$ for some $M \in 
\Irr(\cH_k)$; this also shows that $\rho_x\cong \rho_{E_M}$. 
Furthermore, $d_{E,M}=\langle \rho_{x'}:Y_x\rangle_{\cO} \neq 0$ and so, 
using (b), we obtain:
\[ \langle\rho_{x'}:Y_x\rangle_{\cO} \neq 0 \quad \Rightarrow \quad x=x' 
\quad \mbox{or} \quad \ba_{\rho_{x'}}>\ba_{\rho_{x}},\]
as in the original version of \cite[Theorem~1.1]{myprinc}. In order to 
prove the strengthening, we use the results in \cite{mycell}, 
\cite{myedin} which show that the following variation of (b) holds:
\[ d_{E,M} \neq 0 \quad \Rightarrow \quad E\cong E_M \quad \mbox{or}
\quad \cF_{E} \leq_{\cLR} \cF_{E_M},\; \cF_E \neq \cF_{E_M}.\]
Hence, arguing as above, we obtain
\[ \langle\rho_{x'}:Y_x\rangle_{\cO} \neq 0 \quad \Rightarrow \quad x=x' 
\quad \mbox{or} \quad \cF_{E} \leq_{\cLR} \cF_{E_M}, \; \cF_E \neq
\cF_{E_M}.\]
Now, the multiplicity formula in \cite[Main Theorem 4.23]{LuBook} shows
that $\rho_E$ appears with non-zero multiplicity in the 
``almost-representation'' $R_{E_0}$ of $G$ associated with some $E_0 \in 
\Irr(\cH_K)$ where $\cF_E=\cF_{E_0}$. Then \cite[6.17]{LuBook} shows
that $\cF_E=\cF_{E_0}=\cF_{\rho_E}=\cF_{x'}$. Similarly, since 
$\rho_x \cong \rho_{E_M}$, we have $\cF_{E_M}=\cF_{x}$. Hence, it remains 
to use the equivalence in Proposition~\ref{klord}.
\end{proof}

\section{On the dimensions of irreducible representations} \label{sec3}
Finally, we wish to state a conjecture on the dimensions
of the irreducible representations of $G$ over $k$. Let us first 
consider the situation in characteristic~$0$. Then 
\cite[Main Theorem~4.23]{LuBook} implies that there exists a collection 
of polynomials $\{D_x \mid x \in \bar{X}(\bW,\gamma)\} \subseteq 
\K[t]$ (where $t$ is an indeterminate and $\K$ is a finite
extension of $\Q$ of degree $[K:\Q]=\delta$) such that 
\[ \dim \rho_x=D_x(q) \qquad \mbox{for all $x \in \bar{X}
(\bW,\gamma)$}.\] 
This set of polynomials only depends on $\bW$ and $\gamma$.
Using Lusztig's Jordan decomposition of representations and 
\cite[Prop.~5.1]{Lu5}, one can in fact formulate a global statement 
for all $\Irr_K(G)$, as follows. 

\begin{prop}[Lusztig \protect{\cite{LuBook}, \cite{Lu5}}] 
\label{propdim} There exists a finite set of polynomials 
$\cD_0(\bW,\gamma) \subseteq \K[t]$, depending only on $\bW$ and
$\gamma$, such that 
\[ \{\dim \rho \mid \rho \in \Irr_K(G)\} \quad \subseteq \quad 
\{f(q) \mid f \in \cD_0(\bW,\gamma)\}.\]
\end{prop}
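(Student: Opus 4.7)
The plan is to combine Lusztig's Jordan decomposition of characters with the polynomial nature of unipotent dimensions recalled just before the statement. Let $(\bG^*,F^*)$ be dual to $(\bG,F)$; its Weyl group is canonically identified with $\bW$, with induced automorphism corresponding to $\gamma$. By Lusztig's Jordan decomposition \cite[Prop.~5.1]{Lu5}, every $\rho \in \Irr_K(G)$ is of the form $\rho_{(s,\lambda)}$, where $s$ is a semisimple element of $(\bG^*)^{F^*}$ (taken up to conjugacy) and $\lambda$ is a unipotent character of $C_{\bG^*}(s)^{F^*}$. The dimension is given by
\[ \dim \rho_{(s,\lambda)} \;=\; [G^*:C_{G^*}(s)]_{p'}\cdot\dim\lambda. \]

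The strategy is to show that each factor on the right-hand side is the value at $q$ of a polynomial from a finite set depending only on $(\bW,\gamma)$. For the index, the order formula for finite reductive groups expresses both $|G^*|_{p'}$ and $|C_{G^*}(s)^{F^*}|_{p'}$ as explicit polynomials in $q$ (products of cyclotomic factors) determined by the Weyl groups $\bW$ and $W_s:=W(C_{\bG^*}(s)^\circ)$ together with the automorphisms of these groups induced by $F^*$. For the unipotent factor, the discussion preceding the proposition shows that the dimensions of the unipotent characters of the (possibly disconnected) reductive group $C_{\bG^*}(s)^{F^*}$ form, as $\lambda$ varies, a finite set of polynomial values in $q$ controlled by the pair $(W_s,\gamma_s)$, where $\gamma_s$ denotes the automorphism of $W_s$ induced by $F^*$; this invokes Lusztig's extended parametrisation from \cite{LuBook} in the disconnected case.

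The key finiteness input is that, up to $G^*$-conjugacy, only finitely many possibilities for the pair $(W_s,\gamma_s)$ can occur: the subgroups $W_s$ are Weyl groups of pseudo-Levi subgroups of $\bG^*$, hence correspond to a finite list of subsystems of the root system associated to $\bW$, and $\gamma_s$ takes only finitely many values on each. Crucially, this combinatorial data is determined by $(\bW,\gamma)$ alone, independently of $p$ and $q$. One then defines $\cD_0(\bW,\gamma)$ to be the finite collection of all products of an index polynomial by a unipotent-dimension polynomial, with $(W_s,\gamma_s)$ ranging over this finite list, which yields the desired set of polynomials.

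The main obstacle is the bookkeeping for the disconnected component group $C_{\bG^*}(s)/C_{\bG^*}(s)^\circ$, which arises when $Z_\bG$ is not connected: here one must appeal to Lusztig's classification of unipotent characters for groups with disconnected centralisers, and verify that the extra parameter (a coset in the component group acting on the unipotent characters of the connected part) is again controlled by a finite invariant of $(\bW,\gamma)$. This is precisely what \cite[Prop.~5.1]{Lu5} takes care of, so once that proposition is invoked the remainder is a straightforward check that every polynomial factor appearing depends only on $(\bW,\gamma)$.
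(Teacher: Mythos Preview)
Your argument via Lusztig's Jordan decomposition is correct and is indeed the route the paper alludes to in the sentence immediately preceding the proposition. However, the paper's own explicit justification (given in Remark~\ref{rem3}) is different and considerably more elementary. There the author observes that $\dim R_{\bT_w,\theta}=f_w(q)$ for a polynomial $f_w$ depending only on $\bW,\gamma,w$, and then takes $\cD_0(\bW,\gamma)$ to be the set of \emph{all} rational linear combinations $\sum_{w}(a_w/b_w)f_w$ with $|a_w|\leq|\bW|$ and $0<|b_w|\leq|\bW|$. That this suffices follows from the scalar product formula for the $R_{\bT_w,\theta}$, the partition of $\Irr_K(G)$ into geometric series, and the uniformity of the regular representation---all available already in Deligne--Lusztig \cite{DeLu}, without invoking \cite{LuBook} or \cite{Lu5}.

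The trade-off is clear: your Jordan-decomposition argument yields a much smaller and structurally meaningful set of polynomials (essentially the actual degree polynomials), at the cost of appealing to the full classification and the disconnected-centre machinery of \cite{Lu5}. The paper's argument produces an enormously redundant set, but needs only 1976-vintage results and sidesteps entirely the bookkeeping for disconnected centralisers that you flag as the main obstacle.
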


For example, if $\bW$ is of type $A_1$ and $\gamma=\mbox{id}$ (where,
for example, $G=\mbox{GL}_2(\F_q)$ or $\mbox{SL}_2(\F_q)$ and $q$ is 
any prime power), then we can take
\[\cD_0(A_1,\mbox{id})=\Big\{1,t,t\pm 1,\frac{1}{2}(t\pm 1)\Big\}.\]
If $\bW$ is of type $C_2$ and $\gamma\neq \mbox{id}$ (where 
$G$ is a Suzuki group and $q$ is an odd power of $\sqrt{2}$), then 
we can take
\[\cD_0(C_2,\gamma)=\Big\{1,t^4,t^4+1,\frac{1}{\sqrt{2}}t(t^2-1),
(t^2-1)(t^2\pm t\sqrt{2}+1)\Big\}.\]

\begin{rem} \label{rem3} The results in \cite{LuBook}, \cite{Lu5}
yield a precise and complete description of a set of polynomials
which are needed to express $\dim \rho$ for all $\rho \in \Irr_K(G)$. 
However, here we will only be interested in a qualitative statement
where it will be sufficient to find some, possibly much too large,
but still {\em finite} set of polynomials $\cD_0(\bW,\gamma)$ by which 
we can express $\dim \rho$ for all $\rho \in \Irr_K(G)$.

Note that it is actually not difficult to find such a set $\cD_0(\bW,
\gamma)$. Indeed, for any $w \in \bW$, let $\bT_w \subseteq \bG$ be an
$F$-stable maximal torus of type $w$ and denote by $R_{\bT_w,\theta}$ the
virtual representation defined by Deligne and Lusztig \cite{DeLu}, where 
$\theta \in \Irr_K(\bT_w^F)$. By \cite[\S 2.9, 3.3.8, 7.5.2]{C2}, there 
exists a polynomial $f_w \in \Z[t]$ (depending only on $\bW, \gamma$ and 
$w$) such that $\dim R_{\bT_w,\theta}=f_w(q)$. Then the set 
\[ \cD_0(\bW,\gamma):=\Bigl\{\sum_{w \in \bW} \frac{a_w}{b_w}\, f_w
\;\Big|\; \begin{array}{l} a_w\in \Z \mbox{ and } b_w \in \Z 
\mbox{ such that } \\ |a_w|\leq |\bW| \mbox{ and } 0<|b_w|\leq 
|\bW|\end{array}\Bigr\}\]
has the required properties. This follows using the scalar product 
formula for $R_{\bT_w,\theta}$, the partition of $\Irr_K(G)$ into 
geometric conjugacy classes, and the uniformity of the regular 
representation of $G$. These results can be found in \cite[7.3.4, 
7.3.8, 7.5.6]{C2}; note that these were all already available by 
\cite{DeLu}.
\end{rem}

Now consider the situation in characteristic $\ell>0$.

\begin{conj} \label{dimconj} There exists a finite set of polynomials
$\bar{\cD}(\bW,\gamma)\subseteq \K[t]$, depending only on $\bW,\gamma$
(but not on $q$ or $\ell$), such that 
\[ \{ \dim Y \mid Y \in \Irr_k(G) \} \quad \subseteq \quad \{f(q) 
\mid f \in \bar{\cD}(\bW,\gamma)\}.\]
\end{conj}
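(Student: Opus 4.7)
The plan is to proceed in two stages: first reduce from $\Irr_k(G)$ to modular unipotent irreducibles of a controlled family of auxiliary groups, and then, within each unipotent block, extract $\dim Y$ from the decomposition matrix as a $\Z$-linear combination of the polynomials furnished by Proposition~\ref{propdim}.

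First stage (reduction to unipotent blocks). By Brou\'e--Michel the $\ell$-blocks of $kG$ respect the partition of $\Irr_K(G)$ into rational Lusztig series $\mathcal{E}(G,s)$, where $s$ runs over $F^*$-stable semisimple $\ell'$-classes in the dual group $\bG^*$. The modular Morita equivalences of Bonnaf\'e--Dat--Rouquier (refining Bonnaf\'e--Rouquier in characteristic zero) identify, under mild hypotheses on $s$, the block of $kG$ attached to $(s)$ with a unipotent $\ell$-block of $k[C_{\bG^*}(s)^{F^*}]$. Under such an equivalence, dimensions of corresponding simple modules differ by a factor $[G:C_{\bG^*}(s)^{F^*}]_{p'}$, which is itself a polynomial in $q$ drawn from a finite list determined by $(\bW,\gamma)$. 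Since the Weyl groups of centralisers of semisimple elements are, up to conjugacy, reflection subgroups of $\bW$, only finitely many data $(\bW_s,\gamma_s)$ arise. It therefore suffices to establish the conjecture for unipotent modular irreducibles across this finite collection of Weyl data.

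Second stage (unipotent case). Let $Y \in \Unip_k(G)$ lie in a fixed unipotent $\ell$-block. Conjecture~\ref{modconj} (or its known instances) provides a labelling $Y=Y_x$ for some $x\in\bar{X}(\bW,\gamma)$ under which the decomposition submatrix $D=(\langle\rho_{x'}:Y_{x''}\rangle_\cO)$, indexed by a single $\gamma$-stable two-sided cell, is unitriangular with respect to the closure order on unipotent supports. Hence $D$ is invertible over $\Z$, and inverting the linear system $\dim\rho_{x'}=\sum_{x''} d_{x',x''}\dim Y_{x''}$ yields
\begin{equation*}
\dim Y_x \;=\; \sum_{x'} c_{x,x'}\,D_{x'}(q),
\end{equation*}
with integer coefficients $c_{x,x'}$ that are universal polynomial expressions in the entries of $D$, and where the $D_{x'}$ are the polynomials of Proposition~\ref{propdim}. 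Only finitely many such $D_{x'}$ exist, so the desired set $\bar{\cD}(\bW,\gamma)$ is produced provided the coefficients $c_{x,x'}$ themselves remain in a finite set as $q$ and $\ell$ vary.

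The main obstacle is precisely this last point: proving that the decomposition numbers $\langle \rho_{x'}:Y_x\rangle_\cO$ in unipotent blocks are bounded by a constant depending only on $(\bW,\gamma)$, uniformly in $q$ and $\ell$. This uniform boundedness is known for the general linear and unitary groups, and for small-rank cases, but is open in general. A secondary difficulty is that the Bonnaf\'e--Dat--Rouquier reduction in the first stage breaks down for quasi-isolated semisimple elements in bad characteristic; in those cases one would have to bound $\dim Y_x$ directly, for instance by exhibiting $Y_x$ as a quotient of a projective $\cO G$-module built from Kawanaka's modified generalised Gelfand--Graev representations (Remark~\ref{rem2}), whose Grothendieck-group image is a controlled $\Z$-combination of characteristic-zero characters whose dimensions already lie in a finite set of polynomials.
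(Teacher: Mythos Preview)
The statement you are addressing is Conjecture~\ref{dimconj}, which the paper explicitly presents as an \emph{open} conjecture; no proof is given or claimed. The only case the paper actually verifies is $G=\mathrm{GL}_n(\F_q)$ (Example~\ref{bgl}), with a remark that nothing further is known beyond a few small-rank groups. So there is no ``paper's own proof'' to compare against, and your proposal should be read as a strategy toward the conjecture rather than an alternative argument.

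That said, your plan is a reasonable outline and you correctly isolate the essential difficulty: uniform boundedness, in $q$ and $\ell$, of the unipotent decomposition numbers. Two further gaps deserve mention. First, your second stage invokes Conjecture~\ref{modconj} for the unitriangularity of the unipotent decomposition matrix, but that is itself open in general; you are layering one conjecture on another. Second, the phrase ``indexed by a single $\gamma$-stable two-sided cell'' is not right: within a single two-sided cell all unipotent supports coincide, so no nontrivial triangularity arises there. The unitriangularity in Conjecture~\ref{modconj} is for the full matrix indexed by $\bar{X}(\bW,\gamma)$, ordered by closures of unipotent supports (equivalently, by $\leq_{\cLR}$ on cells via Proposition~\ref{klord}).

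For comparison, the paper's argument for $\mathrm{GL}_n$ bypasses both the Jordan-decomposition reduction and Conjecture~\ref{modconj}. It works directly with the whole decomposition matrix: Fong--Srinivasan supply a basic set $\cS\subseteq\Irr_K(G)$ whose square submatrix $D_0$ is block diagonal with block sizes bounded in terms of $n$ alone, and Dipper--James identify the blocks with decomposition matrices of $q$-Schur algebras of dimension bounded in $n$. This immediately bounds the entries of $D_0$ and of $D_0^{-1}$, and the conclusion then follows from Proposition~\ref{propdim}. Your route via Bonnaf\'e--Dat--Rouquier plus unipotent unitriangularity is the natural template for a general attack, but at present it trades one open problem for several.
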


\begin{rem} \label{rem4} If this conjecture holds then, in particular,
the set $\bar{\cD}(\bW,\gamma)$ will satisfy the condition in
Proposition~\ref{propdim}. (To see this just choose $\ell$ such 
that $\ell\nmid |G|$.) Thus, we may always assume that ${\cD}_0
(\bW,\gamma) \subseteq \bar{\cD}(\bW,\gamma)$. This inclusion will be 
strict in general. For example, if $\bW$ is of type $C_2$ and $\gamma$ 
is non-trivial, then the results in \cite{Burk} show that 
\[ \bar{\cD}(C_2,\gamma)=\cD_0(C_2,\gamma) \cup \{t^4-1\}.\] 
The point of the conjecture is that, in general, it should be sufficient 
to add only finitely many polynomials to $\cD_0(\bW,\gamma)$ in order to 
obtain $\bar{\cD}(\bW,\gamma)$.
\end{rem}

\begin{exmp} \label{bgl} Conjecture~\ref{dimconj} is true for $G=
\mbox{GL}_n(\F_q)$. This seen as follows. Let $D$ be the $\ell$-modular 
decomposition matrix of $G$. Recall that this matrix has rows labelled by 
$\Irr_K(G)$ and columns labelled by $\Irr_k(G)$. By Fong--Srinivasan 
\cite{fs1}, there is a subset $\cS \subseteq \Irr_K(G)$ such that the 
submatrix of $D$ with rows labelled by $\cS$ is square and invertible 
over $\Z$. Let $D_0$ denote this submatrix. Then we obtain each $\dim Y$ 
(where $Y \in \Irr_k(G)$) as an integral linear combination of $\{\dim 
\rho \mid \rho \in\cS\}$ where the coefficients are entries of the 
inverse of $D_0$. The results in \cite{fs1} show, furthermore, that 
$D_0$ is a block diagonal matrix where the sizes of the diagonal blocks 
are bounded by a constant which only depends on $n$ (but not on $q$ or 
$\ell$). Now, Dipper--James \cite{DJ1} showed that these diagonal blocks 
of $D_0$ are given by the decomposition matrices of various $q$-Schur 
algebras. Each of these algebras is finite-dimensional where the dimension 
is bounded by a function in $n$. Hence, $D_0$ is a block diagonal matrix 
where both the sizes and the entries of the diagonal blocks are uniformly 
bounded by a constant which only depends on $n$. (But note that the total
size of $D_0$ depends, of course, on $q$ and $\ell$.) Analogous statements
then also hold for the inverse of $D_0$, with the only difference that the 
entries may be negative (but the absolute values will still be bounded by 
a constant which only depends on~$n$). We conclude that each $\dim Y$ 
(where $Y\in\Irr_k(G)$) can be expressed as an integral linear combination 
of $\{f(q) \mid f \in \cD_0(\bW,\gamma)\}$ where the absolute values of 
the coefficients are bounded by a constant which only depends on $n$. By 
taking all possible such linear combinations of the polynomials in 
$\cD_0(\bW,\gamma)$, we obtain a finite set $\bar{\cD}(\bW,\gamma)$ with 
the required property. 
\end{exmp}

We note that no further examples are known except for some types of 
groups of small rank where explicit computations are possible and one 
can adopt the above arguments; see \cite{LaMi}, \cite{OkuWak1}, 
\cite{OkuWak2} and the references there. In particular, the problem is 
open for the groups $G=\mbox{GU}_n(\F_q)$.

\begin{rem} \label{remell} Recall that, throughout this paper, we have
assumed that $\mbox{char}(k)=\ell\neq p$. In this final remark, we drop 
this asumption and let $\cO$ be such that $\mbox{char}(k)=\ell=p$. Consider
the example $G=\mbox{SL}_2(\F_p)$ where $\bW$  is a cyclic group of order 
$2$. Then 
\[\{\dim Y \mid Y \in \Irr_k(G)\}=\{1,2,\ldots,p\} \qquad \mbox{(see 
\cite[\S 3]{Alp})}.\]
So it is impossible that a statement like that in Conjecture~\ref{dimconj} 
holds for $\Irr_k(G)$ where $\mbox{char}(k)=\ell=p$. -- Thus, 
Conjecture~\ref{dimconj} is an indication of the sharp distinction between 
the modular representation theory of finite groups of Lie type in defining 
and non-defining characteristic.
\end{rem}

 
\end{document}